
\documentclass[12pt]{amsart}

\usepackage{mathrsfs,amssymb}

\usepackage{hyperref}

\usepackage[backend=bibtex8,maxbibnames=99,style=numeric]{biblatex}
\bibliography{referencias}

\AtEveryBibitem{%

  \ifentrytype{doi}{}{%
    \clearfield{doi}
    \clearfield{url}
  }
}

\renewbibmacro{in:}{}
\DeclareFieldFormat
  [article,inbook,incollection,inproceedings,patent,thesis,unpublished]
  {title}{#1\isdot}
\makeatletter
\def\url@leostyle{%
  \@ifundefined{selectfont}{\def\UrlFont{\sf}}{\def\UrlFont{\small\sffamily}}}
\makeatother
\urlstyle{leo}

\usepackage{geometry}
\geometry{verbose,tmargin=2cm,bmargin=2cm,lmargin=2.5cm,rmargin=2.5cm,headsep=0.8cm}

\newtheorem{theorem}{Theorem}[section]
\newtheorem{lemma}[theorem]{Lemma}
\newtheorem{prop}[theorem]{Proposition}

\theoremstyle{definition}

\theoremstyle{remark}
\newtheorem{remark}[theorem]{Remark}

\newtheorem{example}[theorem]{Example}

\numberwithin{equation}{section}

\let \la=\lambda
\let \e=\varepsilon
\let \d=\delta
\let \o=\omega
\let \a=\alpha

\let \b=\beta

\let \O=\Omega
\let \si=\sigma

\let \ga=\gamma

\setcounter{biburllcpenalty}{7000}
\setcounter{biburlucpenalty}{8000}

\begin{document}
\title[Commutators of singular integrals revisited]
{Commutators of singular integrals revisited}

\author[A. K. Lerner]{Andrei K. Lerner}

\address[A. K. Lerner]{Department of Mathematics,
Bar-Ilan University, 5290002 Ramat Gan, Israel}
\email{lernera@math.biu.ac.il}

\author[S. Ombrosi]{Sheldy Ombrosi}
\address[S. Ombrosi]{Departamento de Matem\'atica\\
Universidad Nacional del Sur\\
Bah\'ia Blanca, 8000, Argentina}\email{sombrosi@uns.edu.ar}

\author[I. P. Rivera-R\'{\i}os]{Israel P. Rivera-R\'{\i}os}
\address[I. P. Rivera-R\'{\i}os]{ Department of Mathematics, University of the Basque Country and
BCAM, Basque Center for Applied Mathematics, Bilbao, Spain}
\email{petnapet@gmail.com}

\begin{abstract}
We obtain a Bloom-type characterization of the two-weighted boundedness of iterated commutators of singular integrals.
The necessity is established for a rather wide class of operators, providing a new result even in the unweighted setting for the first order commutators.
\end{abstract}

\keywords{Commutators, weighted inequalities, Calder\'on-Zygmund operators.}

\subjclass[2010]{42B20, 42B25}

\maketitle

\section{Introduction}

Given a linear operator $T$ and a locally integrable function $b$,
define the commutator $[b,T]$ of $T$ and $b$ by
\[
[b,T]f(x)=b(x)T(f)(x)-T(bf)(x).
\]
The iterated commutators $T_b^m, m\in {\mathbb N},$ are defined inductively by
$$T_b^mf=[b, T_b^{m-1}]f,\quad T_b^1f=[b,T]f.$$

We say that a linear operator $T$ is an
$\omega$-Calder\'on-Zygmund operator on ${\mathbb R}^n$ if $T$ is $L^2$ bounded, and can be represented as
\begin{equation}\label{repr}
Tf(x)=\int_{{\mathbb R}^n}K(x,y)f(y)dy\quad\text{for all}\,\,x\not\in \text{supp}\,f,
\end{equation}
with kernel $K$ satisfying the size condition
$|K(x,y)|\le \frac{C_K}{|x-y|^n},x\not=y,$ and the smoothness condition
$$|K(x,y)-K(x',y)|+|K(y,x)-K(y,x')|\le \o\left(\frac{|x-x'|}{|x-y|}\right)\frac{1}{|x-y|^n}$$
for $|x-y|>2|x-x'|$, where $\omega:[0,1]\to [0,\infty)$ is continuous, increasing, subadditive and $\o(0)=0$.
We say that $\o$ satisfies the Dini condition if $\int_0^1\omega(t)\frac{dt}{t}<\infty.$

In this paper, we will prove the following result.

\begin{theorem}\label{mainres}
Let $\mu,\lambda\in A_{p}$, $1<p<\infty$. Further, let $\nu=\left(\frac{\mu}{\lambda}\right)^{\frac{1}{p}}$ and $m\in {\mathbb N}$.
\begin{enumerate}
\renewcommand{\labelenumi}{(\roman{enumi})}
\item If $b\in BMO_{\nu^{1/m}}$, then for every $\omega$-Calder\'on-Zygmund
operator $T$ on ${\mathbb R}^n$ with $\omega$ satisfying the Dini condition,
\begin{equation}\label{itcase}
\|T_{b}^{m}f\|_{L^{p}(\lambda)}\leq c_{n,m,T}\|b\|_{BMO_{\nu^{1/m}}}^{m}\left([\lambda]_{A_{p}}[\mu]_{A_{p}}\right)^{\frac{m+1}{2}\max\left\{ 1,\frac{1}{p-1}\right\} }\|f\|_{L^{p}(\mu)}.
\end{equation}
\item Let $T_{\Omega}$ be an operator defined by (\ref{repr}) with $K(x,y)=\Omega\Big(\frac{x-y}{|x-y|}\Big)\frac{1}{|x-y|^n}$, where
$\O$ is a measurable function on $S^{n-1}$, which does not change sign and is not equivalent to zero on some open subset from  $S^{n-1}$. If there is $c>0$ such that for every bounded measurable set $E\subset {\mathbb R}^n$,
$$\|(T_{\O})_b^m(\chi_E)\|_{L^p(\la)}\le c\mu(E)^{1/p},$$
then $b\in BMO_{\nu^{1/m}}$.
\end{enumerate}
\end{theorem}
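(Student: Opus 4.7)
For (i), the plan is to invoke the now-standard sparse domination for iterated commutators: for every admissible $f$ there exists a sparse collection $\mathcal{S}$ such that
\[
|T_b^m f(x)| \lesssim \sum_{j=0}^m \sum_{Q\in\mathcal{S}} |b(x) - b_Q|^{m-j} \langle |b - b_Q|^j |f| \rangle_Q \chi_Q(x).
\]
One then applies the known two-weight $L^p(\mu) \to L^p(\la)$ bound for each sparse form, using the assumption $b \in BMO_{\nu^{1/m}}$ to replace the oscillations of $b$ on $Q$ by averages of $\nu^{1/m}$, arriving at \eqref{itcase}. The power $\frac{m+1}{2}\max\{1,1/(p-1)\}$ on the joint $A_p$ constant reflects the $m+1$ applications of the sharp one-weight sparse bound balanced through the Bloom weight $\nu = (\mu/\la)^{1/p}$.

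For (ii) the core idea is to adapt Uchiyama's geometric method to iterated commutators in the two-weight setting. By the hypothesis on $\Omega$, we may assume $\Omega \ge c_0 > 0$ on an open cone $V \subset S^{n-1}$. For each cube $Q$, let $\widetilde Q = Q + t\theta_0$ with $\theta_0 \in V$ and $t\asymp \ell(Q)$ chosen so that $(x-y)/|x-y|\in V$ whenever $x\in Q$ and $y\in \widetilde Q$; then $K(x,y)$ has a definite sign and $|K(x,y)|\asymp 1/|Q|$ on $Q\times\widetilde Q$. Using the pointwise identity
\[
T_b^m \chi_E(x) = \int_E (b(x)-b(y))^m K(x,y)\,dy,
\]
split $\widetilde Q = \widetilde Q^+\cup\widetilde Q^-$ and $Q = Q^+\cup Q^-$ via the median $m_b$ of $b$ on $\widetilde Q$. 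For $x\in Q^+$ and $E=\widetilde Q^-$ the integrand is uniformly signed and $(b(x)-b(y))^m\ge(b(x)-m_b)^m\ge 0$, whence
\[
|T_b^m \chi_{\widetilde Q^-}(x)|\gtrsim (b(x)-m_b)^m,\quad x\in Q^+;
\]
a symmetric estimate holds on $Q^-$ with $E=\widetilde Q^+$. Inserting into the testing hypothesis and using the doubling of $\mu$ yields
\[
\int_Q |b-m_b|^{mp}\,\la\,dx\lesssim \mu(Q).
\]
A weighted John--Nirenberg-type characterization of $BMO_{\nu^{1/m}}$, combined with the $A_p$ hypotheses on $\mu,\la$ and the identity $\mu=\nu^p\la$, then converts this $L^{mp}(\la)$ oscillation bound into $\|b\|_{BMO_{\nu^{1/m}}}<\infty$.

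The main obstacle will be the sign-coordinated median decomposition of $Q$ and $\widetilde Q$: only the simultaneous split prevents destructive cancellation in $(b(x)-b(y))^m$ when $m$ is odd, and securing a clean pointwise lower bound requires careful control of the signs of $b(x)-m_b$ and $b(y)-m_b$ across the product set. Even for $m=1$ in the unweighted case, the argument is new since $\Omega$ is only required to have a single sign on some open subset of $S^{n-1}$, a substantially weaker hypothesis than those of Uchiyama and successors. A secondary technical point is the final conversion from the $L^{mp}(\la)$-oscillation estimate to the $BMO_{\nu^{1/m}}$-norm, which demands the correct weighted self-improvement characterization compatible with the Bloom weight $\nu^{1/m}=(\mu/\la)^{1/(mp)}$.
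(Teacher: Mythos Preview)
Your plan for part (i) is essentially the paper's: sparse domination of $T_b^m$ followed by two-weight bounds for the resulting sparse forms, with the $BMO_{\nu^{1/m}}$ hypothesis converting oscillations of $b$ into averages of $\nu^{1/m}$. This is correct.

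For part (ii) there is a genuine gap. You write ``we may assume $\Omega\ge c_0>0$ on an open cone $V\subset S^{n-1}$,'' and your whole pointwise lower bound $|T_b^m\chi_{\widetilde Q^-}(x)|\gtrsim (b(x)-m_b)^m$ rests on having $|K(x,y)|\asymp 1/|Q|$ throughout $Q\times\widetilde Q$. But the hypothesis is only that $\Omega$ is \emph{measurable}, keeps a sign on some open $\Sigma\subset S^{n-1}$, and is not a.e.\ zero there. This does not yield any open subset on which $|\Omega|$ is bounded below: take $\Omega=\chi_C$ with $C\subset\Sigma$ a fat Cantor set (positive measure, empty interior). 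Then $\Omega\ge 0$ on $\Sigma$, $\Omega\not\equiv 0$, yet $|\Omega|\ge c_0$ fails on every open set. Your geometric construction therefore does not go through at the stated level of generality; it would recover only the classical Uchiyama range (continuous $\Omega$).

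The paper handles this by a measure-theoretic substitute (Proposition~3.1): pick a point $\theta_0\in\Sigma$ of approximate continuity with $|\Omega(\theta_0)|=2\varepsilon_0>0$, and build, for each cube $Q$, sets $E\subset Q$, $F\subset k_0Q$ and a \emph{measurable} $G\subset E\times F$ with $|G|\ge\xi_0|Q|^2$ on which $|\Omega((x-y)/|x-y|)|\ge\varepsilon_0$ and $b(x)-b(y)$ has a fixed sign. Because $G$ is only a positive-proportion subset of $E\times F$, one cannot hope for the pointwise bound on all of $Q^\pm$ that your $L^{mp}(\lambda)$-oscillation estimate needs. Instead the paper controls the \emph{local mean oscillation} $\omega_{1/2^{n+2}}(b;Q)$ (an $L^\infty$-type quantity on a set of proportion $1/2^{n+2}$), obtains
\[
\omega_{1/2^{n+2}}(b;Q)^m\lesssim\frac{1}{|Q|}\int_E|(T_\Omega)_b^m\chi_F|,
\]
applies the testing hypothesis with H\"older exponents $p,p'$, and finishes via a weighted John--Str\"omberg characterization (Lemma~2.1) of $BMO_{\nu^{1/m}}$. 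The approximate-continuity construction and the switch to local mean oscillations are precisely the missing ideas that allow the argument to survive without any continuity or lower-bound assumption on $\Omega$.
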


\begin{remark}\label{expl}
We emphasize that in part (ii) of Theorem \ref{mainres}, no size and regularity assumptions on $\O$ are imposed.
It will be useful, however, to distinguish a class of operators satisfying both parts of the theorem. Assume that
$$T_{\O}f(x)=\text{p.v.}\int_{{\mathbb R}^n}f(x-y)\frac{\O(y/|y|)}{|y|^n}dy,$$
where $\O$ is continuous on $S^{n-1}$, not identically zero and $\int_{S^{n-1}}\O\,d\si=0$.
Assuming additionally that
$$\o(\d)=\sup_{|\theta-\theta'|\le \d}|\Omega(\theta)-\Omega(\theta')|$$
satisfies the Dini condition, we obtain that $T_{\O}$ satisfies both parts of Theorem \ref{mainres}.
\end{remark}

Recall that $b\in BMO_{\eta}$ (for a given weight $\eta$) if
\[
\|b\|_{BMO_{\eta}}=\sup_{Q}\frac{1}{\eta(Q)}\int_{Q}|b(x)-b_{Q}|dx<\infty,
\]
where the supremum is taken over all cubes $Q\subset {\mathbb R}^n$.
Here we use the standard notations $\eta(Q)=\int_Q\eta$ and $b_Q=\frac{1}{|Q|}\int_Qb$. We also recall that $w\in A_p ,1<p<\infty,$ if
$$[w]_{A_p}=\sup_{Q\subset {\mathbb R}^n}\left(\frac{1}{|Q|}\int_Qw\right)\left(\frac{1}{|Q|}\int_Qw^{-\frac{1}{p-1}}\right)^{p-1}<\infty.$$

In what follows, we present a brief history preceding Theorem \ref{mainres}, and, in parallel, we outline our novel points.

\begin{list}{\labelitemi}{\leftmargin=1em}
\item Assume first that $m=1$ and $\la=\mu\equiv 1.$ In this case Theorem \ref{mainres} was obtained in the celebrated work by Coifman, Rochberg and Weiss \cite{CRW1976}.

The necessity of $BMO$, expressed in part (ii), was obtained in \cite{CRW1976} under the assumption that $[b,R_j]$ is bounded on $L^p$ for every Riesz transform~$R_j$.
Later this assumption was relaxed in the works by Janson \cite{J1978} and Uchiyama \cite{U1978}. It was shown there that the boundedness of $[b,T_{\O}]$ on $L^p$
(where $T_{\O}$ is defined as in Remark \ref{expl} with $\O\in C^{\infty}(S^{n-1})$ in \cite{J1978} and $\O$ is Lipschitz continuous in \cite{U1978}) implies $b\in BMO$.

Our novel points in part (ii) (even in the unweighted case and when $m=1$) are a much wider class of operators (which includes, for instance, a class of rough singular integrals)
and the fact that the restricted strong type $(p,p)$ of $[b,T_{\O}]$ (instead of the usual strong type $(p,p)$ in \cite{CRW1976,J1978,U1978}) implies $b\in BMO$.

\vskip 3mm
\item Assume that $m=1$ and $\la,\mu\in A_p$. In the one-dimensional case this result was obtained by Bloom \cite{B1985}. Recently it was extended to higher dimensions by
Holmes, Lacey and Wick~\cite{HLW2017}. Later, a quantitative form of this statement, expressed in estimate (\ref{itcase}), was obtained by the authors in \cite{LORRArxiv}.

As in the unweighted case, part (ii) is new in such generality. In \cite{HLW2017} this part was obtained, similarly to \cite{CRW1976}, assuming that $[b,R_j]$ is bounded from $L^p(\mu)$ to $L^p(\la)$ for every
Riesz transform $R_j$.

\vskip 3mm
\item
Assume that $m\ge 2$. In the unweighted setting the necessity of $BMO$ for the Hilbert transform has been recently established by Accomazzo, Parissis and P\'erez \cite{APPPc}. 

Suppose now that $\la,\mu\in A_p$. In the early 90s, Garc\'ia-Cuerva, Harboure, Segovia
and Torrea~\cite{GCHST1991} proved for a class of strongly singular integrals $S$ that $b\in BMO_{\nu^{1/m}}$ implies
$S_{b}^{m}:L^{p}(\mu)\to L^{p}(\lambda)$. It was pointed out in \cite{GCHST1991} that similar methods can be used to obtain
the corresponding estimates for Calder\'on-Zygmund operators.

Estimate (\ref{itcase}) represents a quantitative version of that statement. It looks like a natural extension of the case $m=1$ obtained in \cite{LORRArxiv}.
Notice, however, that it does not seem that this estimate can be deduced via a simple inductive argument.
Observe also that in the case of equal weights, (\ref{itcase}) recovers the sharp dependence on the $A_{p}$ constant established
by Chung, Pereyra and P\'erez \cite{CPP2012} for every $m\ge 1$. This indicates that the exponent $\frac{m+1}{2}$ in (\ref{itcase}) cannot be improved.

Recently, Holmes and Wick \cite{HWArxiv} obtained the $L^p(\mu)\to L^p(\la)$ boundedness of $T_b^m$ under the different assumption
$b\in BMO\cap BMO_{\nu}$ with $\nu=\left(\frac{\mu}{\lambda}\right)^{\frac{1}{p}}$. Hyt\"onen \cite{H2016} provided a simpler argument for this result based on the conjugation method.
We will show below (see Remark \ref{comp} in Section 4) that the assumption $b\in BMO_{\nu^{1/m}}$ is less restrictive than $b\in BMO\cap BMO_{\nu}$.

Part (ii) of Theorem \ref{mainres} for $m\ge 2$ is new even for the commutators of the Hilbert transform. Notice that in \cite{GCHST1991}
the necessity of $b\in BMO_{\nu^{1/m}}$ was deduced from the $L^p(\mu)\to L^p(\la)$ boundedness of the commutators of the Hardy-Littlewood maximal operator.
\end{list}

\vskip 2mm
Summarizing, our new contribution in Theorem \ref{mainres} is the following.
\begin{list}{\labelitemi}{\leftmargin=1em}
\item If $m\ge 2$, then both parts of Theorem \ref{mainres} are new.
\item If $m=1$, then part (ii) provides a much wider class of operators comparing to the previous works, both in weighted and unweighted cases.
\item In part (ii), the necessity of $BMO_{\nu^{1/m}}$ follows from the weighted restricted strong type $(p,p)$ estimates.
\end{list}

The rest of the paper is organized as follows. Section \ref{Prel} is devoted to present some needed preliminary results. In Section \ref{ProofThm} we prove Theorem \ref{mainres}. The last section contains some further comments and remarks related to Theorem \ref{mainres}.


\section{Preliminaries}\label{Prel}
\subsection{$A_{\infty}$ weights} Define the $A_{\infty}$ class of weights by $A_{\infty}=\cup_{p>1}A_p$.
We mention several well known properties of $A_{\infty}$ weights (see, e.g., \cite[Ch. 9]{G2}). First, if $w\in A_{\infty}$, then
$w$ is doubling, that is, for every $\la>1$, there is $c>0$ such that for all cubes $Q$,
\begin{equation}\label{pr1}
w(\la Q)\le cw(Q),
\end{equation}
where $\la Q$ denotes the cube with the same center as $Q$ and side length $\la$ times that of $Q$. Second, for every $0<\a<1$, there exists $0<\b<1$
such that for every cube $Q$ and every measurable set $E\subset Q$ with $|E|\ge \a|Q|$ one has
\begin{equation}\label{pr2}
w(E)\ge \b w(Q).
\end{equation}
Next, there exists $\ga>0$ such that for every cube $Q$,
$$
|\{x\in Q:w(x)\ge \ga w_Q\}|\ge \frac{1}{2}|Q|.
$$
In particular, this property implies immediately that for every cube $Q$ and for all $0<\d<1$,
\begin{equation}\label{pr3}
\frac{1}{|Q|}\int_Qw\le \frac{2^{1/\d}}{\ga}\left(\frac{1}{|Q|}\int_Qw^{\d}\right)^{1/\d}.
\end{equation}

\subsection{Sparse families and mean oscillations}
Given a cube $Q_0\subset {\mathbb R}^n$, let ${\mathcal D}(Q_0)$ denote the set of all dyadic cubes with respect to $Q_0$, that is, the cubes
obtained by repeated subdivision of $Q_0$ and each of its descendants into $2^n$ congruent subcubes.

A dyadic lattice ${\mathscr D}$ in ${\mathbb R}^n$ is any collection of cubes such that
\begin{enumerate}
\renewcommand{\labelenumi}{(\roman{enumi})}
\item
if $Q\in{\mathscr D}$, then each child of $Q$ is in ${\mathscr D}$ as well;
\item
every 2 cubes $Q',Q''\in {\mathscr D}$ have a common ancestor, i.e., there exists $Q\in{\mathscr D}$ such that $Q',Q''\in {\mathcal D}(Q)$;
\item
for every compact set $K\subset {\mathbb R}^n$, there exists a cube $Q\in {\mathscr D}$ containing $K$.
\end{enumerate}

A family of cubes ${\mathcal S}$ is called sparse if there exists $0<\alpha<1$ such that for every
$Q\in {\mathcal S}$ one can find a measurable set $E_Q\subset Q$ with $|E_Q|\ge \alpha|Q|$, and the sets
$\{E_Q\}_{Q\in {\mathcal S}}$ are pairwise disjoint.

Given a measurable function $f$ on ${\mathbb R}^n$ and a cube $Q$,
the local mean oscillation of $f$ on $Q$ is defined by
$$\o_{\la}(f;Q)=\inf_{c\in {\mathbb R}}
\big((f-c)\chi_{Q}\big)^*\big(\la|Q|\big)\quad(0<\la<1),$$
where $f^*$ denotes the non-increasing rearrangement of $f$.

By a median value of $f$ over a measurable set $E$ of positive finite measure we mean a possibly nonunique, real
number $m_f(E)$ such that
$$\max\big(|\{x\in E: f(x)>m_f(E)\}|,|\{x\in E: f(x)<m_f(E)\}|\big)\le |E|/2.$$

Notice that, by Chebyshev's inequality,
\begin{equation}\label{trest}
\sup_{Q}\o_{\la}(f;Q)\le \frac{1}{\la}\|f\|_{BMO}\quad (0<\la<1).
\end{equation}
By a well known result due to John \cite{JOHN1965} and Str\"omberg \cite{STR1979}, the converse estimate holds as well for $\la\le \frac{1}{2}$,
thus providing an alternative characterization of $BMO$ in terms of local mean oscillations.

Similarly to (\ref{trest}), for every weight $\eta$,
$$\sup_{Q}\o_{\la}(f;Q)\frac{|Q|}{\eta(Q)}\le \frac{1}{\la}\|f\|_{BMO_{\eta}}\quad (0<\la<1).$$
We will show that assuming $\eta\in A_{\infty}$, the full analogue of the John-Str\"omberg result holds for $\la\le \la_n$.
This fact is a simple application of the following result due to the first author \cite{L2010} and stated below in the refined
form obtained by Hyt\"onen \cite{H2014}: for every measurable function $f$ on a cube $Q$, there exists a
(possibly empty) $\frac{1}{2}$-sparse family ${\mathcal S}$ of cubes from ${\mathcal D}(Q)$ such that for a.e. $x\in Q$,
\begin{equation}\label{sposc}
|f(x)-m_f(Q)|\le 2\sum_{P\in {\mathcal S}}
\o_{\frac{1}{2^{n+2}}}(f;P)\chi_{P}(x).
\end{equation}

\begin{lemma}\label{sufc} Let $\eta\in A_{\infty}$. Then
\begin{equation}\label{bmocond}
\|f\|_{BMO_{\eta}}\le c\sup_{Q}\o_{\la}(f;Q)\frac{|Q|}{\eta(Q)}\quad\Big(0<\la\le \frac{1}{2^{n+2}}\Big),
\end{equation}
where $c$ depends only on $\eta$.
\end{lemma}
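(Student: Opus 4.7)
The plan is to combine the sparse oscillation bound (\ref{sposc}) with the $A_{\infty}$ property (\ref{pr2}). Fix a cube $Q$ and set $M=\sup_{Q'}\o_{\la}(f;Q')\frac{|Q'|}{\eta(Q')}$. I would first use the elementary inequality $\int_{Q}|f-f_Q|\le 2\int_{Q}|f-c|$, valid for any constant $c$, specialized to $c=m_f(Q)$, so that it suffices to bound $\int_{Q}|f-m_f(Q)|$.

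Applying (\ref{sposc}) inside $Q$ yields a $\frac{1}{2}$-sparse family ${\mathcal S}\subset {\mathcal D}(Q)$ with
\[
\int_{Q}|f-m_f(Q)|\,dx\le 2\sum_{P\in {\mathcal S}}\o_{\frac{1}{2^{n+2}}}(f;P)\,|P|.
\]
Since $\o_{\la}(f;P)$ is non-increasing in $\la$ (immediate from the monotonicity of non-increasing rearrangements), the hypothesis $\la\le \frac{1}{2^{n+2}}$ gives $\o_{\frac{1}{2^{n+2}}}(f;P)\le \o_{\la}(f;P)$, and hence $\o_{\frac{1}{2^{n+2}}}(f;P)\,|P|\le M\,\eta(P)$ for every $P\in {\mathcal S}$.

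To sum $\sum_{P\in {\mathcal S}}\eta(P)$, I would use the sparsity of ${\mathcal S}$ to choose pairwise disjoint sets $E_P\subset P$ with $|E_P|\ge \frac{1}{2}|P|$ and apply (\ref{pr2}) with $\a=\frac{1}{2}$ to get $\eta(E_P)\ge \b\,\eta(P)$ for some $\b\in(0,1)$ depending only on $\eta$. Since $\bigcup_P E_P\subset Q$ is a disjoint union, this produces $\sum_{P\in{\mathcal S}}\eta(P)\le \b^{-1}\eta(Q)$. Chaining the bounds gives $\int_{Q}|f-f_Q|\,dx\le \frac{4M}{\b}\eta(Q)$, which upon dividing by $\eta(Q)$ and taking the supremum over $Q$ is exactly (\ref{bmocond}).

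There is no substantive obstacle: the proof is a clean assembly of the ingredients already laid out in the preliminaries. The only delicate point is that the explicit threshold $\frac{1}{2^{n+2}}$ in the statement is dictated precisely by the constant appearing in Hyt\"onen's refined form of (\ref{sposc}); a weaker hypothesis allowing larger values of $\la$ would break the monotonicity step $\o_{\frac{1}{2^{n+2}}}\le \o_{\la}$ and would require an additional smoothing argument.
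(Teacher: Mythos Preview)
Your proposal is correct and follows essentially the same argument as the paper: reduce to controlling $\int_Q|f-m_f(Q)|$, apply the sparse oscillation bound (\ref{sposc}), use monotonicity of $\o_{\la}$ in $\la$ to pass to the supremum $M$, and then sum $\sum_{P\in{\mathcal S}}\eta(P)$ via sparsity and the $A_\infty$ property (\ref{pr2}). The only cosmetic difference is that the paper first reduces to the endpoint $\la=\frac{1}{2^{n+2}}$ before starting, whereas you carry a general $\la\le\frac{1}{2^{n+2}}$ and invoke monotonicity inside the sum; the content is identical.
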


\begin{proof} Since $\o_{\la}(f;Q)$ is non-increasing in $\la$, it sufficed to prove (\ref{bmocond}) for $\la=\frac{1}{2^{n+2}}$.
Let $Q$ be an arbitrary cube. Then, by (\ref{sposc}),
\begin{eqnarray*}
\int_Q|f-f_Q|dx&\le& 2\int_Q|f-m_f(Q)|dx\le 4\sum_{P\in {\mathcal S},P\subseteq Q}\o_{\frac{1}{2^{n+2}}}(f;P)|P|\\
&\le& 4\left(\sup_{P}\o_{\frac{1}{2^{n+2}}}(f;P)\frac{|P|}{\eta(P)}\right)\sum_{P\in {\mathcal S},P\subseteq Q}\eta(P).
\end{eqnarray*}
Using that ${\mathcal S}$ is sparse and applying (\ref{pr2}), we obtain
$$\sum_{P\in {\mathcal S},P\subseteq Q}\eta(P)\le c\sum_{P\in {\mathcal S},P\subseteq Q}\eta(E_P)\le c\eta(Q),$$
which, along with the previous estimate, completes the proof.
\end{proof}

We will also use the following result proved recently in \cite[Lemma 5.1]{LORRArxiv} and closely related to (\ref{sposc}):
given a dyadic lattice ${\mathscr D}$ and a sparse family ${\mathcal S}\subset {\mathscr D}$, there exists a sparse family $\tilde{\mathcal{S}}\subset {\mathscr D}$
containing $\mathcal{S}$ and such that if $Q\in\tilde{\mathcal{S}}$,
then for a.e. $x\in Q$,
\begin{equation}\label{sparses}
|f(x)-f_{Q}|\leq 2^{n+2}\sum_{P\in\tilde{\mathcal{S}},\ P\subseteq Q}\left(\frac{1}{|P|}\int_{P}|f-f_{P}|\right)\chi_{P}(x).
\end{equation}

\section{Proof of Theorem \ref{mainres}}\label{ProofThm}
\subsection{Proof of part (i)}
The proof consists in a careful generalization of the techniques used
to establish this result in the case $m=1$ in \cite{LORRArxiv}.

We rely upon the following sparse bound obtained in \cite{IFRRArxiv}: there exist $3^n$ dyadic lattices ${\mathscr D}_j$ and sparse families ${\mathcal S}_j\subset {\mathscr D}_j$ such that
\[
|T_{b}^{m}f(x)|\leq c_{n,T}\sum_{j=1}^{3^{n}}\sum_{k=0}^{m}\binom{m}{k}\sum_{Q\in\mathcal{S}_{j}}|b(x)-b_{Q}|^{m-k}\left(\frac{1}{|Q|}\int_{Q}|b-b_{Q}|^{k}|f|\right)\chi_{Q}(x).
\]
Hence it suffices to provide suitable estimates for
\[
A_{b}^{m,k}f(x)=\sum_{Q\in\mathcal{S}}|b(x)-b_{Q}|^{m-k}\left(\frac{1}{|Q|}\int_{Q}|b-b_{Q}|^{k}|f|\right)\chi_{Q}(x),
\]
where ${\mathcal S}$ is a sparse family from some dyadic lattice ${\mathscr D}$.

We start observing that, by duality,
\begin{equation}
\|A_{b}^{m,k}f\|_{L^{p}(\lambda)}\leq\sup_{\|g\|_{L^{p'}(\lambda)}=1}\sum_{Q\in\mathcal{S}}\left(\int_{Q}|g\lambda||b-b_{Q}|^{m-k}\right)\frac{1}{|Q|}\int_{Q}|b-b_{Q}|^{k}|f|.\label{eq:DualAmk}
\end{equation}
By (\ref{sparses}), there exists a sparse family $\tilde{\mathcal{S}}\subset {\mathscr D}$
containing $\mathcal{S}$ and such that if $Q\in\tilde{\mathcal{S}}$,
then for a.e. $x\in Q$,
\[
|b(x)-b_{Q}|\leq 2^{n+2}\sum_{P\in\tilde{\mathcal{S}},\ P\subseteq Q}\left(\frac{1}{|P|}\int_{P}|b-b_{P}|\right)\chi_{P}(x).
\]
From this, assuming that $b\in BMO_{\eta}$, where $\eta$ is a weight to be chosen later, we obtain
\[
|b(x)-b_{Q}|\leq 2^{n+2}\|b\|_{BMO_{\eta}}\sum_{P\in\tilde{\mathcal{S}},\ P\subseteq Q}\eta_P\chi_{P}(x).
\]
Hence,
\begin{equation}
\begin{split} & \sum_{Q\in\mathcal{S}}\left(\int_{Q}|g\lambda||b-b_{Q}|^{m-k}\right)\frac{1}{|Q|}\int_{Q}|b-b_{Q}|^{k}|f|\\
 & \leq c\|b\|_{BMO_{\eta}}^{m}\sum_{Q\in\mathcal{\tilde{S}}}\Big(\frac{1}{|Q|}\int_{Q}|g\lambda|\Big(\sum_{P\in\tilde{\mathcal{S}},\ P\subseteq Q}\eta_P\chi_{P}\Big)^{m-k}\Big)\\
 & \times\Big(\frac{1}{|Q|}\int_{Q}\Big(\sum_{P\in\tilde{\mathcal{S}},\ P\subseteq Q}\eta_P\chi_{P}\Big)^{k}|f|\Big)|Q|.
\end{split}
\label{eq:Int}
\end{equation}

Now we notice that since the cubes from $\tilde{\mathcal S}$ are dyadic, for every $l\in {\mathbb N}$,
\begin{eqnarray*}
\left(\sum_{P\in\tilde{\mathcal{S}},\ P\subseteq Q}\eta_P\chi_{P}\right)^{l}&=&\sum_{P_1,P_2,\dots,P_l\subseteq Q,\, P_i\in {\tilde{\mathcal S}}}\eta_{P_1}\eta_{P_2}\dots \eta_{P_l}\chi_{P_1\cap P_2\cap\dots\cap P_l}\\
&\le& l!\sum_{P_l\subseteq P_{l-1}\subseteq\dots\subseteq P_1\subseteq Q,\, P_i\in {\tilde{\mathcal S}}}\eta_{P_1}\eta_{P_2}\dots \eta_{P_l}\chi_{P_l}.
\end{eqnarray*}
Therefore,
\[
\int_{Q}|h|\Big(\sum_{P\in\tilde{\mathcal{S}},\ P\subseteq Q}\eta_P\chi_{P}\Big)^{l}\leq l!\sum_{P_{l}\subseteq P_{l-1}\subseteq\dots\subseteq P_{1}\subseteq Q,\,P_{i}\in\tilde{\mathcal{S}}}\eta_{P_{1}}\eta_{P_{2}}\dots\eta_{P_{l}}|h|_{P_{l}}|P_{l}|.
\]
Further,
\[
\begin{split} & \sum_{P_{l}\subseteq P_{l-1}\subseteq\dots\subseteq P_{1}\subseteq Q,\,P_{i}\in\tilde{\mathcal{S}}}\eta_{P_{1}}\eta_{P_{2}}\dots\eta_{P_{t}}|h|_{P_{l}}|P_{l}|\\
 & =\sum_{P_{l-1}\subseteq\dots\subseteq P_{1}\subseteq Q,\,P_{i}\in\tilde{\mathcal{S}}}\eta_{P_{1}}\eta_{P_{2}}\dots\eta_{P_{l-1}}\sum_{P_{l}\subseteq P_{l-1},P_{l}\in\tilde{\mathcal{S}}}|h|_{P_{l}}\int_{P_{l}}\eta.\\
 & \leq\sum_{P_{l-1}\subseteq\dots\subseteq P_{1}\subseteq Q,\,P_{i}\in\tilde{\mathcal{S}}}\eta_{P_{1}}\eta_{L_{2}}\dots\eta_{P_{l-1}}\int_{P_{l-1}}A_{\tilde{\mathcal{S}}}(|h|)\eta.\\
 & =\sum_{P_{l-1}\subseteq\dots\subseteq P_{1}\subseteq Q,\,P_{i}\in\tilde{\mathcal{S}}}\eta_{P_{1}}\eta_{L_{2}}\dots\eta_{P_{l-1}}\left(A_{\mathcal{\tilde{S}},{\eta}}|h|\right)_{P_{l-1}}|P_{l-1}|,
\end{split}
\]
where $A_{\mathcal{\tilde{S}},{\eta}}h=A_{\tilde{\mathcal{S}}}(h)\eta$ and $A_{\tilde{\mathcal{S}}}(h)=\sum_{Q\in \tilde{\mathcal{S}}}h_Q\chi_Q$.
Iterating this argument, we conclude that
\[
\int_{Q}|h|\Big(\sum_{P\in\tilde{\mathcal{S}},\ P\subseteq Q}\eta_P\chi_{P}\Big)^{l}\lesssim\int_{Q}A^l_{\mathcal{\tilde{S}},{\eta}}|h|,
\]
where $A^l_{\mathcal{\tilde{S}},{\eta}}$ denotes the operator $A_{\mathcal{\tilde{S}},{\eta}}$ iterated $l$ times.
From this we obtain that the right-hand side of \ref{eq:Int} is controlled by
\begin{eqnarray*}
&&c\|b\|_{BMO_{\eta}}^{m}\sum_{Q\in \mathcal{\tilde{S}}}\left(\frac{1}{|Q|}\int_{Q}
A^k_{\mathcal{\tilde{S}},{\eta}}(|f|)\right)\left(\frac{1}{|Q|}\int_{Q}A^{m-k}_{\mathcal{\tilde{S}},{\eta}}(|g|\lambda)\right)|Q|\\
&&=c\|b\|_{BMO_{\eta}}^{m}\int_{{\mathbb R}^n}A_{\tilde{\mathcal{S}}}\big(A^k_{\mathcal{\tilde{S}},{\eta}}(|f|)\big)A^{m-k}_{\mathcal{\tilde{S}},{\eta}}(|g|\lambda).
\end{eqnarray*}

Using that the operator $A_{\tilde{\mathcal{S}}}$ is self-adjoint, we proceed as follows:
\begin{eqnarray*}
&&\int_{{\mathbb R}^n}A_{\tilde{\mathcal{S}}}\big(A^k_{\mathcal{\tilde{S}},{\eta}}(|f|)\big)A^{m-k}_{\mathcal{\tilde{S}},{\eta}}(|g|\lambda)=
\int_{{\mathbb R}^n}A_{\tilde{\mathcal{S}}}\big(A^k_{\mathcal{\tilde{S}},{\eta}}(|f|)\big)A_{\tilde{\mathcal{S}}}\big(A^{m-k-1}_{\mathcal{\tilde{S}},{\eta}}(|g|\lambda)\big)\eta\\
&&=\int_{{\mathbb R}^n}A_{\tilde{\mathcal{S}}}\big(A_{\tilde{\mathcal{S}}}\big(A^k_{\mathcal{\tilde{S}},{\eta}}(|f|)\big)\eta\big) A^{m-k-1}_{\mathcal{\tilde{S}},{\eta}}(|g|\lambda)
=\int_{{\mathbb R}^n}A_{\tilde{\mathcal{S}}}\big(A^{k+1}_{\mathcal{\tilde{S}},{\eta}}(|f|)\big) A^{m-k-1}_{\mathcal{\tilde{S}},{\eta}}(|g|\lambda)\\
&&=\dots=\int_{{\mathbb R}^n}A_{\tilde{\mathcal{S}}}\big(A^{m}_{\mathcal{\tilde{S}},{\eta}}(|f|)\big)|g|\lambda.
\end{eqnarray*}
Combining the obtained estimates with \eqref{eq:DualAmk} yields
\begin{equation}\label{AAm}
\|A_{b}^{m,k}f\|_{L^{p}(\lambda)}\lesssim\|b\|_{BMO_{\eta}}^{m}\|A_{\tilde{\mathcal{S}}}\big(A^{m}_{\mathcal{\tilde{S}},{\eta}}(|f|)\big)\|_{L^{p}(\lambda)}.
\end{equation}
Applying that
$\|A_{\tilde{\mathcal S}}\|_{L^p(w)}\lesssim [w]_{A_p}^{\max\big\{1,\frac{1}{p-1}\big\}}$ (see, e.g., \cite{CUMP2012}), we obtain
\begin{eqnarray*}
\|A_{\tilde{\mathcal{S}}}\big(A^{m}_{\mathcal{\tilde{S}},{\eta}}(|f|)\big)\|_{L^{p}(\lambda)}&\lesssim& [\lambda]_{A_{p}}^{\max\left\{ 1,\frac{1}{p-1}\right\}}
\|A^{m}_{\mathcal{\tilde{S}},{\eta}}(|f|)\|_{L^{p}(\lambda)}\\
&=&[\lambda]_{A_{p}}^{\max\left\{ 1,\frac{1}{p-1}\right\}}\|A_{\mathcal{\tilde{S}}}\big(A^{m-1}_{\mathcal{\tilde{S}},{\eta}}(|f|)\big)\|_{L^{p}(\lambda \eta^p)}\\
&\lesssim&\big([\lambda]_{A_{p}}[\lambda \eta^p]_{A_{p}}\big)^{\max\left\{ 1,\frac{1}{p-1}\right\}}\|A^{m-1}_{\mathcal{\tilde{S}},{\eta}}(|f|)\|_{L^{p}(\lambda \eta^p)}\\
&\lesssim&\big([\lambda]_{A_{p}}[\lambda \eta^p]_{A_{p}}[\lambda \eta^{2p}]_{A_{p}}\dots [\lambda \eta^{mp}]_{A_{p}}\big)^{\max\left\{ 1,\frac{1}{p-1}\right\}}\|f\|_{L^p(\lambda\eta^{mp})}.
\end{eqnarray*}
Hence, setting $\eta=\nu^{1/m}$, where $\nu=(\mu/\lambda)^{1/p}$ and applying (\ref{AAm}), we obtain
$$
\|A_{b}^{m,k}f\|_{L^{p}(\lambda)}\lesssim\|b\|_{BMO_{\nu^{1/m}}}^{m}
\left([\lambda]_{A_{p}}[\mu]_{A_{p}}\prod_{i=1}^{m-1}[\lambda^{1-\frac{i}{m}}\mu^{\frac{i}{m}}]_{A_{p}}\right)^{\max\left\{ 1,\frac{1}{p-1}\right\}}\|f\|_{L^p(\mu)}.
$$

By H\"older's inequality,
$$
\prod_{i=1}^{m-1}[\lambda^{1-\frac{i}{m}}\mu^{\frac{i}{m}}]_{A_{p}}\le \prod_{i=1}^{m-1}[\lambda]_{A_p}^{1-\frac{i}{m}}[\mu]_{A_p}^{\frac{i}{m}}=([\lambda]_{A_p}[\mu]_{A_p})^{\frac{m-1}{2}},
$$
which, along with the previous estimate, yields
$$
\|A_{b}^{m,k}f\|_{L^{p}(\lambda)}\lesssim\|b\|_{BMO_{\nu^{1/m}}}^{m}
\left([\lambda]_{A_{p}}[\mu]_{A_{p}}\right)^{\frac{m+1}{2}\max\left\{ 1,\frac{1}{p-1}\right\} }\|f\|_{L^{p}(\mu)},
$$
and therefore the proof of part (i) is complete.

\subsection{Proof of part (ii)} Since $\mu,\la\in A_p$, by H\"older's inequality, it follows that $\nu^{1/m}\in A_2$. Therefore, by
Lemma \ref{sufc}, it suffices to show that there exists $c>0$ such that for all $Q$,
\begin{equation}\label{sufsh}
\o_{\frac{1}{2^{n+2}}}(b;Q)\le c(\nu^{1/m})_Q.
\end{equation}

The proof of (\ref{sufsh}) is based on the following auxiliary statement.

\begin{prop}\label{auxst}
There exist $0<\e_0, \xi_0<1$ and $k_0>1$ depending only on $\Omega$ and $n$ such that the following holds.
For every cube $Q\subset {\mathbb R}^n$, there exist measurable sets $E\subset Q, F\subset k_0Q$ and $G\subset E\times F$ with
$|G|\ge \xi_0|Q|^2$ such that
\begin{enumerate}
\renewcommand{\labelenumi}{(\roman{enumi})}
\item $\o_{\frac{1}{2^{n+2}}}(b;Q)\le |b(x)-b(y)|$ for all $(x,y)\in E\times F$;
\item $\Omega\Big(\frac{x-y}{|x-y|}\Big)$ and $b(x)-b(y)$ do not change sign in $E\times F$;
\item $\Big|\Omega\Big(\frac{x-y}{|x-y|}\Big)\Big|\ge \e_0$ for all $(x,y)\in G$.
\end{enumerate}
\end{prop}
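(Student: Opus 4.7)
The plan is to combine a Lebesgue-density analysis of $\O$ with a quantile decomposition of $b$ on $Q$ and on a shifted translate, and then run a short dichotomy that produces the required pairing. Since $\O$ does not change sign and is not equivalent to zero on some open subset of $S^{n-1}$, for small enough $\e_0>0$ the set $V_0=\{\theta\in S^{n-1}:|\O(\theta)|\ge\e_0\}$ has positive surface measure. Pick a Lebesgue density point $\theta_0\in V_0$ and, given a parameter $\delta\in(0,1)$ to be fixed later, a radius $\rho>0$ with $\sigma(V_0\cap B(\theta_0,\rho))\ge(1-\delta)\sigma(B(\theta_0,\rho))$; put $V=V_0\cap B(\theta_0,\rho)$. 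With $\ell$ the side length of $Q$, define $D=C_n\ell/\rho$ (for a suitable dimensional constant $C_n$) and $\tilde Q=Q(x_Q-D\theta_0,\ell)$; this choice of $D$ guarantees $(x-y)/|x-y|\in B(\theta_0,\rho)$ for every $(x,y)\in Q\times\tilde Q$, and $k_0:=2D/\ell+1$ ensures $\tilde Q\subset k_0Q$. On $B(\theta_0,\rho)$ the function $\O$ has a fixed sign, and on $V$ one has $|\O|\ge\e_0$.

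Set $\omega=\o_{1/2^{n+2}}(b;Q)$ and $\lambda=1/2^{n+3}$, and introduce the quantiles $c_+=\sup\{c:|\{b\ge c\}\cap Q|\ge\lambda|Q|\}$ and $c_-=\inf\{c:|\{b\le c\}\cap Q|\ge\lambda|Q|\}$. Then $E_+:=\{b\ge c_+\}\cap Q$ and $E_-:=\{b\le c_-\}\cap Q$ both have measure at least $\lambda|Q|$, while $|\{b>c_+\}\cap Q|,|\{b<c_-\}\cap Q|\le\lambda|Q|$ by the definition of $c_\pm$ as sup/inf. Setting $c_0=(c_++c_-)/2$, the inclusion $\{|b-c_0|>(c_+-c_-)/2\}\subset\{b>c_+\}\cup\{b<c_-\}$ yields $((b-c_0)\chi_Q)^*(2\lambda|Q|)\le(c_+-c_-)/2$, hence $c_+-c_-\ge 2\o_{2\lambda}(b;Q)=2\omega$. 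Carry out the identical construction on $\tilde Q$ to obtain $\tilde c_\pm$ (with $\tilde c_+\ge\tilde c_-$, since $\lambda<1/2$) and $\tilde E_\pm\subset\tilde Q$, each of measure at least $\lambda|\tilde Q|$.

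Now split into two cases. If $\tilde c_-\le c_+-\omega$, set $E=E_+,\,F=\tilde E_-$: for $(x,y)\in E\times F$, $b(x)-b(y)\ge c_+-\tilde c_-\ge\omega$. Otherwise $\tilde c_->c_+-\omega\ge c_-+\omega$ (using $c_+-c_-\ge 2\omega$), hence $\tilde c_+\ge\tilde c_->c_-+\omega$, and one takes $E=E_-,\,F=\tilde E_+$ to get $b(y)-b(x)>\omega$. In either case (i) holds, $b(x)-b(y)$ has constant sign on $E\times F$, and so does $\O((x-y)/|x-y|)$ because its argument lies in $B(\theta_0,\rho)$; this proves (ii). Defining $G=\{(x,y)\in E\times F:(x-y)/|x-y|\in V\}$ makes (iii) automatic with constant $\e_0$. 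To control $|G|$, fix $x\in Q$ and change variables $y=x-r\theta$ in the integral defining $|\{y\in\tilde Q:(x-y)/|x-y|\in B(\theta_0,\rho)\setminus V\}|$; using $|x-y|\asymp D$ on $Q\times\tilde Q$ bounds this inner integral by $C_n'\delta|Q|$, and integrating over $x$ gives $|(E\times F)\setminus G|\le C_n'\delta|Q|^2$, so that $|G|\ge(\lambda^2-C_n'\delta)|Q|^2\ge\xi_0|Q|^2$ once $\delta$ is chosen small in terms of $n$. The main obstacles are the quantile estimate $c_+-c_-\ge 2\omega$, which needs careful bookkeeping with the rearrangement definition, and the polar-coordinates estimate that converts the density condition on $V$ into a two-variable measure bound on $Q\times\tilde Q$; the dichotomy itself is then a short algebraic consequence.
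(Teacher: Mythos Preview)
Your argument is correct and follows the same architecture as the paper's proof: locate a direction $\theta_0$ and a small cap around it on which $\Omega$ keeps its sign and is mostly bounded below, place the second set at distance $\asymp \ell/\rho$ from $Q$ in that direction so that every normalized difference $(x-y)/|x-y|$ lands in the cap, estimate the exceptional set in polar coordinates, and split according to the distribution of $b$. One point needs tightening: to justify ``on $B(\theta_0,\rho)$ the function $\Omega$ has a fixed sign'' you must take $\theta_0$ inside the open set $\Sigma\subset S^{n-1}$ on which $\Omega$ does not change sign (choose it as a Lebesgue density point of $V_0\cap\Sigma$, which has positive measure for small $\e_0$) and shrink $\rho$ so that $B(\theta_0,\rho)\cap S^{n-1}\subset\Sigma$; a density point of $V_0$ alone need not have this property.

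The paper's implementation differs only in detail: it uses an annular sector $\{x_0+R\theta:\theta\in B(\theta_0,\d/2),\ R\asymp \ell/\d\}$ in place of your translated cube $\tilde Q$, approximate continuity of $\Omega$ at a point of nonvanishing in place of Lebesgue density of the super-level set, and the median $m_b(\mathcal F)$ together with a single sign split in place of your two-sided quantile dichotomy on both $Q$ and $\tilde Q$. Your polar-coordinates bound $|(E\times F)\setminus G|\le C_n'\delta|Q|^2$ is indeed correct (and matches the paper's computation): for each fixed $\theta$ the radial interval $\{r:x-r\theta\in\tilde Q\}$ has length at most $\mathrm{diam}(\tilde Q)\asymp\ell$ while $r^{n-1}\asymp D^{n-1}$, so the $\rho$-dependence cancels and $\delta$ can be chosen depending only on $n$, after which $\rho$, $D$ and $k_0$ are fixed in terms of $\Omega$ and $n$.
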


Let us show first how to prove (\ref{sufsh}) using this proposition. Combining properties (i) and (iii) yields
$$\o_{\frac{1}{2^{n+2}}}(b;Q)^m|G|\le \frac{1}{\e_0}\iint_{G}|b(x)-b(y)|^m\left|\Omega\left(\frac{x-y}{|x-y|}\right)\right|dxdy.$$
From this, and using also that $|x-y|\le \frac{k_0+1}{2}\text{diam}\,Q$ for all $(x,y)\in G$, we obtain
$$
\o_{\frac{1}{2^{n+2}}}(b;Q)^m|G|\le \frac{1}{\e_0}\Big(\frac{k_0+1}{2}\sqrt n\Big)^n|Q|\iint_{G}|b(x)-b(y)|^m\left|\Omega\left(\frac{x-y}{|x-y|}\right)\right|\frac{dxdy}{|x-y|^n}.
$$
By property (ii), $\big(b(x)-b(y)\big)^m\Omega\left(\frac{x-y}{|x-y|}\right)$ does not change sign in $E\times F$.
Hence, taking also into account that $|G|\ge \xi_0|Q|^2$, we obtain
\begin{eqnarray*}
\o_{\frac{1}{2^{n+2}}}(b;Q)^m&\le& \frac{1}{\e_0\xi_0}\Big(\frac{k_0+1}{2}\sqrt n\Big)^n\frac{1}{|Q|}\int_{E}\int_F|b(x)-b(y)|^m\left|\Omega\left(\frac{x-y}{|x-y|}\right)\right|\frac{dydx}{|x-y|^n}\\
&=&\frac{1}{\e_0\xi_0}\Big(\frac{k_0+1}{2}\sqrt n\Big)^n\frac{1}{|Q|}\int_{E}\left|\int_F\big(b(x)-b(y)\big)^m\Omega\left(\frac{x-y}{|x-y|}\right)\frac{dy}{|x-y|^n}\right|dx.
\end{eqnarray*}
Observing that $(T_{\O})_b^m$ is represented as
$$(T_{\O})_b^mf(x)=\int_{{\mathbb R}^n}\big(b(x)-b(y)\big)^m\Omega\left(\frac{x-y}{|x-y|}\right)f(y)\frac{dy}{|x-y|^n}\quad(x\not\in\text{supp}\,f),$$
the latter estimate can be written as
\begin{equation}\label{short}
\o_{\frac{1}{2^{n+2}}}(b;Q)^m\le \frac{c}{|Q|}\int_E|(T_{\O})_b^m(\chi_{F})|dx,
\end{equation}
where $c$ depends only on $\Omega$ and $n$.

By H\"older's inequality,
$$
\frac{1}{|Q|}\int_{E}|(T_{\O})_b^m(\chi_{F})|dx
\le \frac{1}{|Q|}\left(\int_{E}|(T_{\O})_b^m(\chi_{F})|^p\la dx\right)^{1/p}\left(\int_Q\la^{-\frac{1}{p-1}}\right)^{1/p'}.
$$
Using the main assumption on $T_{\O}$ along with the facts that $F\subset k_0Q$ and $\mu\in A_p$ and taking into account (\ref{pr1}), we obtain
$$\left(\int_{E}|(T_{\O})_b^m(\chi_{F})|^p\la dx\right)^{1/p}\le c\mu(F)^{1/p}\le c\mu(Q)^{1/p},$$
which, along with the previous estimate and (\ref{short}), implies
$$
\o_{1/2^{n+2}}(b;Q)^m\le c\left(\frac{1}{|Q|}\int_Q\mu\right)^{1/p}\left(\frac{1}{|Q|}\int_Q\la^{-\frac{1}{p-1}}\right)^{1/p'}.
$$

By (\ref{pr3}), $\frac{1}{|Q|}\int_Q\mu\le c\Big(\frac{1}{|Q|}\int_Q\mu^{1/r}\Big)^r$ for $r>1$. Further, by H\"older's inequality,
$$\left(\frac{1}{|Q|}\int_Q\mu^{1/r}\right)^r\le \left(\frac{1}{|Q|}\int_Q\nu^{1/m}\right)^{mp}\left(\frac{1}{|Q|}\int_I\la^{\frac{1}{r-mp}}\right)^{r-mp}.$$
Therefore, taking $r=mp+1$, we obtain
\begin{eqnarray*}
\o_{1/2^{n+2}}(b;Q)^m&\le& c\left(\frac{1}{|Q|}\int_Q\nu^{1/m}\right)^{m}\left(\frac{1}{|Q|}\int_Q\la\right)^{1/p}\left(\frac{1}{|Q|}\int_Q\la^{-\frac{1}{p-1}}\right)^{1/p'}\\
&\le& c\left(\frac{1}{|Q|}\int_Q\nu^{1/m}\right)^{m},
\end{eqnarray*}
which proves (\ref{sufsh}).

\begin{proof}[Proof of Proposition \ref{auxst}]
Let $\Sigma\subset S^{n-1}$ be an open set such that $\Omega$ does not change sign and not equivalent to zero there. Then there exists a
point $\theta_0\in\Sigma$ of approximate continuity (see, e.g., \cite[p. 46]{EG1992} for this notion) of $\Omega$ and such that $|\Omega(\theta_0)|=2\epsilon_0$
for some $\epsilon_0>0$. By the definition of approximate continuity, for every $\e>0$,
$$\lim_{\d\to 0}\frac{\sigma\{\theta\in B(\theta_0,\d)\cap S^{n-1}:|\Omega(\theta)-\Omega(\theta_0)|<\e\}}{\sigma\{B(\theta_0,\d)\cap S^{n-1}\}}=1,$$
where $B(\theta_0,\d)$ denotes the open ball centered at $\theta_0$ of radius $\d$, and $\sigma$ denotes the surface measure on $S^{n-1}$.
Therefore, for every $0<\a<1$, one can find $\d_\a>0$ such that
$$B(\theta_0,\d_\a)\cap S^{n-1}\subset \Sigma$$
and
\begin{equation}\label{intst}
\sigma\{\theta\in B(\theta_0,\d_\a)\cap S^{n-1}:|\Omega(\theta)|\ge\e_0\}\ge (1-\a)\si\{B(\theta_0,\d_\a)\cap S^{n-1}\}.
\end{equation}

Let $Q\subset {\mathbb R}^n$ be an arbitrary cube. Take the smallest $r>0$ such that $Q\subset B(x_0,r)$.
Let $\theta\in B(\theta_0,\d_{\a}/2)\cap S^{n-1}$ and let $y=x_0+R\theta$, where $R>0$ will be chosen later. Our goal is to choose $R$ such that the estimate
$\Big|\frac{x-y}{|x-y|}-\theta_0\Big|<\d_\a$ will hold for all $x\in B(x_0,r)$.

Write $x\in B(x_0,r)$ as $x=x_0+\ga\nu$, where $\nu\in S^{n-1}$ and $0<\ga<r$. We have
$$\frac{x-y}{|x-y|}=\theta+\frac{\ga\nu-(R-|x-y|)\theta}{|x-y|}.$$
Further,
\begin{eqnarray*}
\Big|\frac{\ga\nu-(R-|x-y|)\theta}{|x-y|}\Big|&\le& \frac{\ga}{|x-y|}+\frac{|R-|x-y||}{|x-y|}\\
&\le& \frac{2\ga}{|x-y|}\le \frac{2\ga}{R-\ga}\le \frac{2r}{R-r}.
\end{eqnarray*}
For every $R\ge \frac{(4+\d_\a)r}{\d_\a}$ we have $\frac{2r}{R-r}\le \frac{\d_\a}{2}$ and therefore,
$$
\Big|\frac{x-y}{|x-y|}-\theta_0\Big|\le |\theta-\theta_0|+\frac{2r}{R-r}<\d_\a.
$$

Hence, setting
$${\mathcal F}_{\a}=\Big\{x_0+R\theta: \theta\in B(\theta_0,\d_\a/2)\cap S^{n-1}, \frac{(4+\d_\a)r}{\d_\a}\le R\le \frac{(4+\d_\a)2r}{\d_\a}\Big\},$$
we obtain that
\begin{equation}\label{inclus}
\frac{x-y}{|x-y|}\in B(\theta_0,\d_\a)\cap S^{n-1}\subset \Sigma\quad((x,y)\in Q\times {\mathcal F}_{\a}).
\end{equation}
Also, it follows easily from the definition of ${\mathcal F}_{\a}$ that
\begin{equation}\label{embk}
{\mathcal F}_{\a}\subset k(\d_{\a},n)Q\quad\text{and}\quad |{\mathcal F}_{\a}|\ge \rho_n\frac{|Q|}{\d_{\a}}.
\end{equation}

By (\ref{inclus}), $\O\Big(\frac{x-y}{|x-y|}\Big)$ does not change sign on $Q\times {\mathcal F}_{\a}$. Let us show now that choosing $\a$ small enough,
we obtain that $\Big|\O\Big(\frac{x-y}{|x-y|}\Big)\Big|<\e_0$ on a small subset of $Q\times {\mathcal F}_{\a}$.
Set
$$N=\{\theta\in B(\theta_0,\d_\a)\cap S^{n-1}:|\Omega(\theta)|<\e_0\}$$
and
$$
{\mathcal G}_{\a}=\Big\{(x,y)\in Q\times {\mathcal F}_{\a}:\frac{x-y}{|x-y|}\in N\Big\}.
$$

Let us estimate $|{\mathcal G}_{\a}|$. For $x\in Q$ denote
$$
{\mathcal G}_{\a}(x)=\Big\{y\in {\mathcal F}_{\a}:\frac{x-y}{|x-y|}\in N\Big\}.
$$
Notice that by (\ref{intst}),
$$\si(N)\le \a\si(B(\theta_0,\d_\a)\cap S^{n-1})\le c_n\a\d_{\a}^{n-1}.$$
Next, for all $(x,y)\in Q\times {\mathcal F}_{\a}$ we have $|x-y|\le c'_n\frac{r}{\d_{\a}}$, and hence,
$$|{\mathcal G}_{\a}(x)|\le \Big|\Big\{s\theta: 0\le s\le c'_n\frac{r}{\d_{\a}},\theta\in N\Big\}\Big|\le c''_n\frac{|Q|}{\d_{\a}^{n}}\si(N)\le \b_n\a\frac{|Q|}{\d_{\a}}. $$
Therefore,
$$|{\mathcal G}_{\a}|=\int_Q|{\mathcal G}_{\a}(x)|dx\le \b_n\a\frac{|Q|^2}{\d_{\a}}.$$
Combining this with the second part of (\ref{embk}), we obtain that there exists $\a_0<1$ depending only on $n$ such that
\begin{equation}\label{2n}
|{\mathcal G}_{\a_0}|\le \frac{1}{2^{n+5}}|{\mathcal F}_{\a_0}||Q|.
\end{equation}

By the definition of $\o_{1/2^{n+2}}(b;Q)$, there exists a subset ${\mathcal E}\subset Q$ with $|{\mathcal E}|=\frac{1}{2^{n+2}}|Q|$ such that for every $x\in {\mathcal E}$,
\begin{equation}\label{proper1}
\o_{1/2^{n+2}}(b;Q)\le |b(x)-m_b({\mathcal F}_{\a_0})|.
\end{equation}
Next, there exist subsets $E\subset {\mathcal E}$ and $F\subset {\mathcal F}_{\a_0}$ such that $|E|=\frac{1}{2^{n+3}}|Q|$ and $|F|=\frac{1}{2}|{\mathcal F}_{\a_0}|$,
and, moreover,
\begin{equation}\label{proper2}
|b(x)-m_b({\mathcal F}_{\a_0})|\le |b(x)-b(y)|
\end{equation}
for all $x\in E, y\in F$ and $b(x)-b(y)$ does not change sign in $E\times F$. Indeed, take $E$ as a subset of
either $$E_1=\{x\in {\mathcal E}:b(x)\ge m_b({\mathcal F}_{\a_0})\}\quad\text{or}\quad E_2=\{x\in {\mathcal E}:b(x)\le m_b({\mathcal F}_{\a_0})\}$$
with $|E_i|\ge \frac{1}{2}|{\mathcal E}|$, and the corresponding $F$ will be either $\{y\in {\mathcal F}_{\a}:b(y)\le  m_b({\mathcal F}_{\a_0})\}$ with $|F|=\frac{1}{2}|{\mathcal F}_{\a_0}|$
or its complement.

Combining (\ref{proper1}) and (\ref{proper2}) yields property (i) of Proposition \ref{auxst}. Also, since
$\O\Big(\frac{x-y}{|x-y|}\Big)$ does not change sign on $Q\times {\mathcal F}_{\a_0}$, we have that property (ii) holds as well.
Next, setting $G=(E\times F)\setminus {\mathcal G}_{\a_0}$, we obtain, by the second part of (\ref{embk}) and (\ref{2n}), that
$$|G|\ge |E||F|-|{\mathcal G}_{\a_0}|\ge \frac{1}{2^{n+5}}|{\mathcal F}_{\a_0}||Q|\ge \nu_0|Q|^2,$$
where $\nu_0$ depends only on $\Omega$ and $n$, and, moreover, property (iii) follows from the definition of ${\mathcal G}_{\a_0}$.
Finally, notice that by the first part of (\ref{embk}), $F\subset {\mathcal F}_{\a_0}\subset k_0Q$
with $k_0=k(\d_{\a_0},n)$. Therefore, Proposition \ref{auxst} is completely proved.
\end{proof}

\section{Remarks and complements}
\begin{remark} The second part of Theorem \ref{mainres} leaves an interesting question whether the assumption on $\Omega$
that it does not change sign on some open subset from  $S^{n-1}$ can be further relaxed.
In particular, one can ask whether this part holds for arbitrary measurable function $\Omega$, which is not equivalent to zero.
\end{remark}

\begin{remark}\label{factor}
Similar to \cite{CRW1976,HLW2017,U1978}, Theorem \ref{mainres} can be applied to provide a weak factorization result for Hardy spaces.
For example, following Holmes, Lacey and Wick \cite{HLW2017}, one can characterize the weighted Hardy space $H^1(\nu)$ but in terms of a single singular integral,
as this was done by Uchiyama \cite{U1978}. To be more precise, under the hypotheses and notation of Theorem \ref{mainres} and for the class of operators $T_{\O}$ described in Remark \ref{expl},
we have
$$\|f\|_{H^1(\nu)}\simeq\inf\Big\{\sum_{i=1}^{\infty}\|g_i\|_{L^{p'}(\la^{1-p'})}\|h_i\|_{L^p(\mu)}: f=\sum_{i=1}^{\infty}\big(g_i(T_{\O})h_i-h_i(T_{\O})^*g_i\big)\Big\}.$$
This can be proved exactly as Corollary 1.4 in \cite{HLW2017}.
\end{remark}

\begin{remark}\label{restr}
Comparing both parts of Theorem \ref{mainres}, for the class of operators described in Remark \ref{expl} we have that the $L^p(\mu)\to L^p(\la)$ boundedness of $(T_{\O})_b^m$
is equivalent to the restricted $L^p(\mu)\to L^p(\la)$ boundedness. It is interesting that $BMO_{\nu^{1/m}}$ does not appear in this statement, though it plays the central role in the proof.
\end{remark}

\begin{remark}\label{iter}
Theorem \ref{mainres} answers the following question: what is the relation between the boundedness properties of commutators of different order?
Again, let $T_{\O}$ be a singular integral as in Remark \ref{expl}. Assume that $w\in A_p$. Then Theorem \ref{mainres} implies immediately that
for every fixed $k,m\in {\mathbb N}, k\not=m,$
\begin{equation}\label{impl}
(T_{\O})_b^m:L^p(w)\to L^p(w)\Leftrightarrow (T_{\O})_b^k:L^p(w)\to L^p(w).
\end{equation}
As in the previous remark, this implication is linked by $BMO$.

However, in the case of different weights, an analogue of (\ref{impl}) is not true in any direction, as the following example shows.

\begin{example}\label{ex} Let $n=1$ and let $H$ be the Hilbert transform. Set $\mu=|x|^{1/2}$ and $\lambda=1$. Then we obviously have that $\mu,\lambda\in A_2$.
Define $\nu=(\mu/\lambda)^{1/2}=|x|^{1/4}$ and let $b=\nu^{1/2}=|x|^{1/8}$. Then $b\in BMO_{\nu^{1/2}}$, since
for every interval $I\subset {\mathbb R},$
$$\frac{1}{\nu^{1/2}(I)}\int_I|\nu^{1/2}-(\nu^{1/2})_I|dx\le 2.$$
Therefore, by Theorem \ref{mainres}, $H_{b}^{2}:L^{2}(\mu)\to L^{2}$. On the other hand, taking
$I_{\e}=(0,\e)$ with $\e$ arbitrary small, we obtain
$$
\frac{1}{\nu(I_{\e})}\int_{I_{\e}}|\nu^{1/2}-(\nu^{1/2})_{I_{\e}}|dx=\frac{5}{4\e^{5/4}}\int_0^{\e}\big|x^{1/8}-\frac{8}{9}\e^{1/8}\big|dx\ge \frac{c}{\e^{1/8}}.
$$
Therefore, $b\not\in BMO_{\nu}$ and hence, by Bloom's theorem, $[b,H]:L^{2}(\mu)\not\to L^{2}$.

On the other hand, set $\mu=|x|^{-1/2}$ and $\lambda=1$. Then again $\mu,\lambda\in A_2$.
Define $\nu=(\mu/\lambda)^{1/2}=|x|^{-1/4}$ and let $b=\nu$. Then, arguing exactly as above, we obtain that $b\in BMO_{\nu}$
(and hence, $[b,H]:L^{2}(\mu)\to L^{2}$) and $b\not\in BMO_{\nu^{1/2}}$. Therefore, by Theorem \ref{mainres},
$H_{b}^{2}:L^{2}(\mu)\not\to L^{2}$.
\end{example}
\end{remark}

\begin{remark}\label{comp}
Compare the condition $b\in BMO_{\nu^{\frac{1}{m}}}$ with $b\in BMO\cap BMO_{\nu}$ from the works \cite{HWArxiv, H2016}.
First, as we mentioned before, if $\mu,\lambda\in A_p$, then $\nu=\left(\frac{\mu}{\lambda}\right)^{\frac{1}{p}}\in A_2$.

\begin{lemma}\label{Thm:EmmbedingClass}Let $u\in A_2$ and $r>1$. Then
\begin{equation}
BMO_{u}\cap BMO\subseteq BMO_{u^{\frac{1}{r}}}.\label{eq:embedding}
\end{equation}
Furthermore, the embedding (\ref{eq:embedding}) is strict, in general. Namely, for every $r>1$, there exists a weight $u\in A_{2}$
and a function $b\in BMO_{u^{\frac{1}{r}}}\setminus BMO$.
\end{lemma}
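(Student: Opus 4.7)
The plan for the embedding is to exploit the trivial identity $a = a^{1/r}\cdot a^{(r-1)/r}$ applied to $a = \int_Q|b-b_Q|\,dx$, and bound one factor using $b\in BMO_u$ and the other using $b\in BMO$. For any cube $Q$ this gives
\[
\int_Q |b-b_Q|\,dx \;=\; \Bigl(\int_Q |b-b_Q|\,dx\Bigr)^{1/r}\Bigl(\int_Q |b-b_Q|\,dx\Bigr)^{(r-1)/r} \;\leq\; \|b\|_{BMO_u}^{1/r}\|b\|_{BMO}^{(r-1)/r}\, u(Q)^{1/r}|Q|^{(r-1)/r}.
\]
To convert the right-hand side into a multiple of $u^{1/r}(Q)$, I would invoke the $A_\infty$ self-improvement inequality (\ref{pr3}) with $w=u$ and $\delta=1/r$, which yields $\frac{u(Q)}{|Q|}\leq c\bigl(\frac{u^{1/r}(Q)}{|Q|}\bigr)^{r}$, equivalently $u(Q)^{1/r}|Q|^{(r-1)/r}\leq c^{1/r}\,u^{1/r}(Q)$. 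Dividing by $u^{1/r}(Q)$ and taking the supremum over $Q$ gives the embedding with the quantitative bound $\|b\|_{BMO_{u^{1/r}}}\leq c\,\|b\|_{BMO_u}^{1/r}\|b\|_{BMO}^{(r-1)/r}$.

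For the strictness assertion, I would exhibit an explicit one-dimensional counterexample: fix any $\beta\in(0,1)$ and take $u(x)=|x|^{\beta}$ together with $b(x)=u(x)^{1/r}=|x|^{\beta/r}$. Then $u\in A_2(\mathbb{R})$ since $0<\beta<1$. The $BMO_{u^{1/r}}$ bound for $b$ is essentially tautological: for any weight $w$ one has $\int_Q|w-w_Q|\,dx\leq 2w(Q)$, so applying this with $w=u^{1/r}=b$ gives $\|b\|_{BMO_{u^{1/r}}}\leq 2$. On the other hand, for the intervals $I_R=(-R,R)$ one computes $b_{I_R}=R^{\beta/r}/(\beta/r+1)$, and isolating the contribution of a fixed-proportion neighborhood of the origin shows that $\frac{1}{|I_R|}\int_{I_R}|b-b_{I_R}|\,dx$ is comparable to $R^{\beta/r}$, which blows up as $R\to\infty$, so $b\notin BMO$.

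The only conceptually nontrivial step is the direction of the inequality in (\ref{pr3}): plain Hölder gives $u^{1/r}(Q)\leq u(Q)^{1/r}|Q|^{(r-1)/r}$, i.e.\ the opposite direction of what is needed, and it is precisely the $A_\infty$ self-improvement property that upgrades this to a two-sided comparison up to a constant depending on $u$. Once this is in hand, the embedding reduces to two lines of Hölder-type interpolation, and the counterexample is a direct integral computation.
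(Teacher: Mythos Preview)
Your proof is correct and follows essentially the same approach as the paper's. For the embedding, the paper first applies (\ref{pr3}) to replace $u^{1/r}(Q)$ in the denominator by $u(Q)^{1/r}|Q|^{1/r'}$ and then splits the oscillation integral, whereas you split first and then apply (\ref{pr3}); the ingredients and logic are identical. For the strictness, the paper gives the same power-weight example $u(x)=|x|^{\alpha}$, $b=u^{1/r}$, and argues $b\notin BMO$ by appealing to the failure of John--Nirenberg, while you compute the oscillations on $(-R,R)$ directly; both justifications are valid and amount to the same observation.
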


\begin{proof}
By (\ref{pr3}),
\[
\begin{split}\frac{1}{u^{\frac{1}{r}}(Q)}\int_{Q}|b(x)-b_{Q}|dx & \leq\frac{c}{u(Q)^{\frac{1}{r}}|Q|^{\frac{1}{r'}}}\int_{Q}|b(x)-b_{Q}|dx\\
 & =c\left(\frac{1}{u(Q)}\int_{Q}|b(x)-b_{Q}|dx\right)^{\frac{1}{r}}\left(\frac{1}{|Q|}\int_{Q}|b(x)-b_{Q}|dx\right)^{\frac{1}{r'}},
\end{split}
\]
from which \eqref{eq:embedding} readily follows.

To show the second part of the lemma, we use the same idea as in Example \ref{ex}. Let $u(x)=|x|^{\alpha}, 0<\alpha<n$. Then $u\in A_2$. Let $b=u^{1/r}=|x|^{\alpha/r}$.
Then $b\in BMO_{u^{1/r}}$. However, $b\not\in BMO$, since it is clear that $b$ does not satisfy the John-Nirenberg inequality.
\end{proof}

Take an integer $m\ge 2$. In accordance with Lemma \ref{Thm:EmmbedingClass}, take $u\in A_{2}$
and $b\in BMO_{u^{\frac{1}{mp}}}\setminus BMO$. Then, setting $\mu=u$ and $\lambda=1$, by Theorem \ref{mainres}
we obtain that $T_b^m:L^p(u)\to L^p$ for every $p\ge 2$. This kind of estimates is not covered in \cite{HWArxiv} due to the
fact that $b\not\in BMO$.
\end{remark}

\section*{Acknowledgements}
The first author was supported by ISF grant No. 447/16 and ERC Starting Grant No. 713927.
The second was supported by CONICET PIP 11220130100329CO, Argentina.
The third author was supported by the Basque Government through the BERC 2014-2017
program and by the Spanish Ministry of Economy and Competitiveness MINECO through BCAM Severo Ochoa excellence accreditation SEV-2013-0323 and also through the projects MTM2014-53850-P and MTM2012-30748.

\printbibliography

\end{document}